\def\etal{et~al.}
\begin{document}
\mainmatter

\title{Flip Graphs of Bounded Degree Triangulations}

\author{Oswin Aichholzer\inst{1} \and Thomas Hackl\inst{1} \and David Orden\inst{2} \and Pedro Ramos\inst{2} \and \\G\"unter Rote\inst{3} \and Andr\'e Schulz\inst{4} \and Bettina Speckmann\inst{5}}

\institute{Institute for Software Technology, Graz University of Technology, Austria\\
  \email{[oaich|thackl]@ist.tugraz.at}
\and Departamento de Matem\'aticas, Universidad de Alcal\'a, Spain\\
  \email{[david.orden|pedro.ramos]@uah.es}
\and Institute of Computer Science, FU Berlin, Germany \\
  \email{rote@inf.fu-berlin.de}
  \and Institut f\"ur Mathematische Logik und Grundlagenforschung, Universit\"at M\"unster, Germany \\
  \email{andre.schulz@uni-muenster.de}~{\sf Tel:	+49 251 83-32686  Fax:	+49 251 83-33078}
\and Department of Mathematics and Computer Science, TU Eindhoven, the Netherlands\\
  \email{speckman@win.tue.nl}
}

\authorrunning{O. Aichholzer, T. Hackl, D. Orden, P. Ramos, G. Rote, A. Schulz, B. Speckmann}

 
\maketitle

\begin{abstract}\noindent
  We study flip graphs of triangulations whose maximum vertex degree
  is bounded by a constant $k$. In particular, we consider
  triangulations of sets of $n$ points in convex position in the plane
  and prove that their flip graph is connected if and only if $k > 6$;
  the diameter of the flip graph is $O(n^2)$. We also show that, for
  general point sets,
  flip graphs of pointed pseudo-triangulations can be disconnected
  for $k \leq 9$, and
  flip graphs of triangulations can be disconnected for any~$k$.

  Additionally, we consider a relaxed version of the original problem. We allow the violation of the degree bound $k$ by a small constant. Any two triangulations with maximum degree at most $k$ of a convex point set are connected in the flip graph by a path of length $O(n \log n)$, where every intermediate triangulation has maximum degree at most $k+4$.
\end{abstract}

{\bf Keywords:} Flip graphs, Triangulations, Rotation distance, Connectivity, Degree bounds

\section{Introduction}\label{sec:introduction}

An edge flip is a common local and constant size operation that
transforms one triangulation into another. If two adjacent triangles form a convex
quadrilateral, a flip removes their common edge and replaces it with the other diagonal of the
convex quadrilateral.
The \emph{flip graph}  of triangulations
of a planar point set $S$ has a vertex for every triangulation of $S$,
and two vertices are connected by an edge if there is a flip that
transforms the corresponding triangulations into each other. One of
the first and most fundamental results concerning edge flips in
triangulations is the fact that flips can be used repeatedly to
convert any triangulation into the Delaunay
triangulation~\cite{Sipson1973}. This implies
that the flip graph of planar triangulations is connected for any planar point set~$S$.

The \emph{flip distance} between two triangulations is the minimum
number of flips needed to convert one triangulation into the
other. The diameter of the flip graph, which is an upper bound on the flip
distance, is known to be $\Theta(n)$ if $S$ is in convex
position~\cite{STT88}, and $\Theta(n^2)$ if $S$ is in general position (see for example the book of Edelsbrunner~\cite[page 11]{E06}).
However, computing the flip distance between two particular triangulations
seems to be difficult, and only partial results are known:
Hanke \etal~\cite{hos-efdt-96} proved an upper bound of the flip distance in terms
of the number of crossings between the edges of the triangulations, while
Eppstein~\cite{e-hefg-07} provides an $O(n^2)$ algorithm which computes the flip
distance between two triangulations for a class of very special point sets: those
having no empty convex pentagons (collinear points are allowed).
%
%
In higher dimensions the flip graph does not  have to be
connected~\cite{Santos2000}.

Of growing interest are subgraphs of flip graphs which correspond
to particular classes of triangulations. Houle \etal~\cite{hhnr-gtpf-05} consider triangulations which contain a
perfect matching of the underlying point set. They show that this
class of triangulations is connected via flips, that is, the
corresponding subgraph of the flip graph is connected. Related results
exist for order-$k$ Delaunay graphs, which consist of a subset of
$k$-edges, where a $k$-edge is an edge for which a covering disk
exists which covers at most $k$ other points of the set. For general
point sets the graph of order-$k$ Delaunay graphs is connected via
edge flips for $k \leq 1$, but there exist examples for $k \geq 2$
that can not be converted into each other without leaving this
class~\cite{abghnr-dga-08}. If the underlying point set is in convex
position,  the resulting flip
graph is connected for any $k \geq 0$~\cite{abghnr-dga-08}. The flip operation has also been
extended to other planar graphs, see Bose and Hurtado~\cite{bh-fpg-09} for a
recent and extensive survey.

%
%
%

There are point sets for which every triangulation has a vertex of
degree $n-1$,
%
%
but point sets in convex position always have triangulations with maximum
vertex degree~4. Therefore, we concentrate on point sets in convex position and study the following
question: is the flip graph of triangulations with maximum vertex
degree $k$ connected? Triangulations of sets of~$n$ points in convex
position are in one to one correspondence to  many other combinatorially equivalent structures, all of them interpreting Catalan numbers~\cite[Chapter~6]{stanley}.
Our results can be reinterpreted via these equivalences. Maybe the most prominent of these structures
are binary search trees over a set of $n$ elements stored in the leaves. A degree-$k$ bounded triangulation corresponds to a search tree, whose elements are connected to its neighbors by a path of at most $k$ vertices. A flip in a triangulation translates to the standard rotation in a search tree.

\smallskip\noindent
{\bf Results.}
Let $S$ be a set of $n$ points in convex position in the plane. In
Section~\ref{sec:convex} we show  that
the flip graph of triangulations of $S$ with maximum vertex degree at most $k$ is disconnected if $k\leq 6$.
Then we prove that the flip graph is connected for any $k > 6$ and
its  diameter is $O(n^2)$.
In Section~\ref{sec:relaxed} we improve the diameter
bound to $O(n\log n)$  allowing the violation of the degree bound by $4$ in
intermediate steps.
%
Finally, in Section~\ref{sec:general}, we consider point sets
in general position and show that flip graphs of the considered triangulations can be
disconnected for any~$k$. Moreover, we show that the flip graph of
so-called pointed pseudo-triangulations
can be disconnected for $k\leq 9$ when the maximum vertex degree is required to be at most~$k$.

Partial results of this work have been presented at the \emph{European Conference on Combinatorics, Graph Theory and Applications (EuroComb)}, in Bordeaux, France, in September 2009.
An extended abstract without proofs was published in the corresponding conference proceedings~\cite{ahorrss-fgbdt-09}.
Preliminary results for flipping while maintaining a relaxed degree bound
(Section~\ref{sec:relaxed}) appeared before in the PhD thesis of one of the authors~\cite{h-rlt-10}.
%

\section{Flipping with tight degree bounds}
\label{sec:convex}

Let $\mathcal{T}_k(S)$ be the set of triangulations of
$S$, such that all points have degree at most~$k$. We denote with ${\cal F}_k(S)$ the flip graph over the ground set  $\mathcal{T}_k(S)$.
Two vertices are connected in ${\cal F}_k(S)$ if their triangulations differ by a flip.
%
%
If clear from the context, the reference to $S$ will be omitted. As mentioned above, arbitrary point sets do not necessarily have a
triangulation of bounded vertex degree, but point sets in convex
position always have triangulations of maximum vertex degree~4. An example is depicted in
\figurename~\ref{fig:lower}(a).

%

\smallskip\noindent {\bfseries Definitions and notation.}
Throughout this section and Section~\ref{sec:relaxed}, $S$ is a
set of $n$ points in convex position in the plane.
%
%
Let $D$ be the dual graph of a triangulation $T$ of $S$. Since $S$
is in convex position, $D$ is a tree.
We distinguish three different types of triangles in $T$: \emph{ears},
which have two edges on the convex hull of $S$, \emph{path triangles},
which have one edge on the convex hull of $S$, and \emph{inner
  triangles}, which have no edge on the convex hull of $S$.  The
\emph{tip} of an ear is the vertex that is incident to two convex
hull edges.  The ears of $T$ are dual to the \emph{leaves} of $D$
and inner triangles of $T$ are dual to \emph{branching vertices} of $D$ of
degree three. A \emph{path} in $D$ is any connected subgraph of~$D$
that consists only of vertices of degree two or one, thus
every vertex of a path is dual to a path triangle or an ear.
A path that contains a leaf is called a \emph{leaf
  path}. An inner
triangle whose corresponding branching vertex in $D$ is adjacent to at least two leaf paths is named
\emph{merge triangle}.
A set of triangles in $T$ that is dual to a path in~$D$ is called a \emph{strip}. If the strip is dual to a leaf path
 that is adjacent to a branching vertex we call the strip an \emph{ear strip}.

%

We define two special types of strips: fans and zigzags.
A {\em fan} is a maximal subset of at least two triangles that
 all share one common vertex of
$S$, the so-called \emph{fan handle}, and whose convex hull edges are consecutive.
%
%
The size of a fan is the number of
triangles it consists of.
A strip is said to form a \emph{zigzag} if the
deletion of the convex hull edges leaves a path. Notice that an ear can also be a zigzag, and a zigzag and a fan might share a triangle.
If the zigzag is not a part of a larger zigzag it is called \emph{maximal}.
%
The flipping of every other diagonal of a zigzag is called an
\emph{inversion} of the zigzag.

A triangulation whose triangles form a single zigzag is called a \emph{zigzag triangulation}.
Such a triangulation has exactly two ears.
A zigzag
triangulation is uniquely defined (up to an inversion) by the location of
one of its ears. We select one of the zigzag triangulations as the \emph{canonical triangulation}.
A triangulation is called a \emph{fringe triangulation}, if
every ear strip is a zigzag. In particular, every zigzag
triangulation is a fringe triangulation.

\begin{theorem}\label{thm:456}
Let $S$ be a set of $n$ points in convex position. The flip graph ${\cal F}_k(S)$ is disconnected for $k\in\{4,5,6\}$.
\end{theorem}
\begin{proof}
Every convex point set with $n$ points has  $\Theta(n)$ different zigzag triangulations. Clearly,  one cannot flip even a single
edge in such a triangulation without exceeding a vertex degree of 4. See \figurename~\ref{fig:lower}(a).
\begin{figure}[htb]
  \centering
  \includegraphics[width=.8\columnwidth]{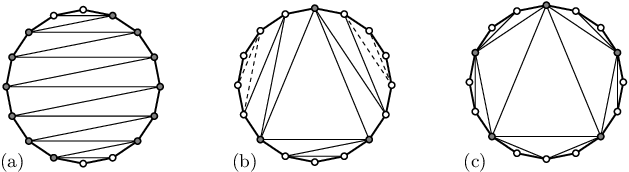}
  \caption{Triangulations with maximum vertex degree $k = 4$ (a), $k = 5$ (b), and $k = 6$ (c).}
  \label{fig:lower}
\end{figure}
For maximum degree $k = 5$, consider the triangulation shown in
\figurename~\ref{fig:lower}(b). Only the dashed edges can be flipped, but
there are $\Theta(n)$ rotationally symmetric versions of this
triangulation, none of which can be reached from any other without
exceeding a vertex degree of 5. For $k = 6$ consider the triangulation
depicted in \figurename~\ref{fig:lower}(c). No edge of this triangulation can
be flipped without violating the degree bound but again there are $\Theta(n)$ rotationally symmetric
versions of this triangulation, none of which can be reached from any
other without exceeding a vertex degree of 6.
\end{proof}

In contrast to the negative result of Theorem~\ref{thm:456} we prove
as a positive result the following theorem in the remainder of this section.
\begin{theorem}\label{theorem:main}
  Let $S$ be a set of $n$ points in convex position. For any
  $k>6$ the flip graph ${\cal F}_k(S)$ is connected and has diameter  $O(n^2)$.
\end{theorem}

\smallskip
\noindent{\bfseries Proof Outline.}
We prove Theorem~\ref{theorem:main} by showing how to flip a triangulation $T\in\mathcal{T}_k(S)$ $(k>6)$ to
the canonical triangulation with no more than $O(n^2)$ flips while maintaining the degree bound.
As a first step we show in
Subsection~\ref{subsec:fringe} how to flip $T$ to a fringe
triangulation. In Subsection~\ref{subsec:zigzag} we
prove that we can construct  a \emph{light} merge
triangle, that is, a merge triangle with at least two vertices of
degree smaller than~$k$. We then show how to remove this light merge triangle by
merging its adjacent ear strips (zigzags) with $O(n)$ flips, resulting in a
triangulation that has one less inner triangle. This triangulation can
again be converted into a fringe triangulation without introducing new inner triangles. After repeating this
step $O(n)$ times we have converted $T$ into a zigzag
triangulation.  Finally, in
Subsection~\ref{subsec:rotate} we demonstrate how to ``rotate'' any
zigzag triangulation to the canonical triangulation of $S$
using $O(n)$ flips.

\subsection{Creating a fringe triangulation}\label{subsec:fringe}

\begin{lemma}\label{lem:fans}
  Let $S$ be a set of $n$ points in convex position and let
  $T\in\mathcal{T}_k(S)$, $k>6$. Then $T$ can be transformed into a fringe
  triangulation of $S$ in $O(n^2)$ flips, while at no time exceeding a
  vertex degree of $k$.
%
\end{lemma}
\begin{proof}
\begin{figure}[htb]
    \centering
    \includegraphics[width=.65\columnwidth]{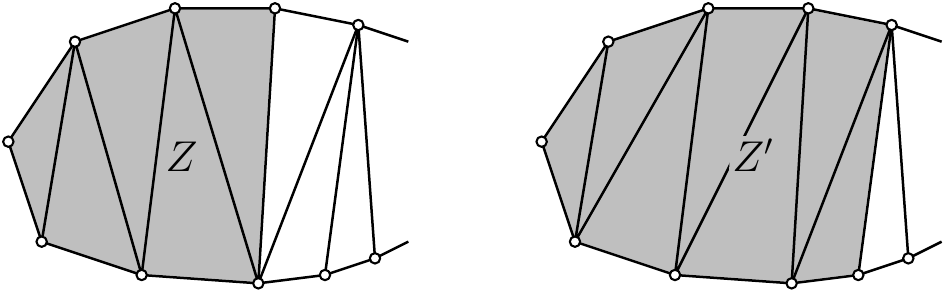}
    \caption{A zigzag can be ``extended'' by an inversion.}
    \label{fig:fan}
\end{figure}
Recall that in a fringe triangulation every ear strip is a
zigzag. If $T$ is not a fringe triangulation, then it has at least one ear strip $E$
that is not a zigzag. Let $Z$ be the maximal zigzag that  contains the ear of  $E$. The end triangle of $Z$ that is not the ear is called the \emph{terminal triangle}. The terminal triangle is also part of a fan, otherwise $Z$ would not have been a maximal zigzag or $E$ would be a zigzag. Let $v_h$ be the fan handle of this fan.

We perform the inversion of $Z$ that removes the diagonal that contributes to the degree of $v_h$.
This does not violate the degree bound, since only vertices with degree at most three increase their degree.
By inverting the zigzag we changed the situation on the terminal triangle. Before the inversion this triangle was also part of a fan. The inversion of $Z$, however, is a zigzag that can be extended by at least one more path triangle. This induces a new maximal zigzag $Z'$ inside $E$ that contains its ear. The number of triangles of $Z'$ exceeds the number of triangles of $Z$. We can repeat this procedure until we get a zigzag, whose terminal triangle is next to a merge triangle. By this we have transformed $E$ into a zigzag (see Figure~\ref{fig:fan} for an illustration). We repeat this for every ear strip that is not a zigzag.

An inversion includes at most $O(n)$ flips. Since we extend the zigzags that contain the ears of $T$ every time we invert, we process at most $O(n)$ inversions, which leads to a bound of $O(n^2)$ flips for flipping to a fringe triangulation.
\end{proof}

In order to flip to the canonical triangulation, we first flip to a fringe triangulation, but not all intermediate triangulations
in the flip sequence will be fringe triangulations. In particular, we might have to flip to a fringe triangulation every time
we removed a merge triangle. The next lemma allows us to analyze the accumulated costs of these flips. We define as an \emph{outer triangle}
 a triangle that is contained in a zigzag, which contains an ear.
 
\begin{lemma}\label{lem:fansacc}
Let $T$ be a triangulation with $\ell$ outer triangles. Then $T$ can be flipped to a fringe triangulation $T_f$ with no more than $n(\ell'-\ell)$ flips,
where $\ell'>\ell$ is the number of outer triangles in $T_f$.
\end{lemma}
\begin{proof}
We flip from $T$ to $T_f$ by converting every ear strip into a zigzag as explained in the proof of Lemma~\ref{lem:fans}. 
Assume that some ear is incident to a maximal zigzag of length $m$ and the corresponding ear strip contains $m'>m$ triangles. Every inversion we perform increases the number 
of outer triangles. Thus, $m'-m$ inversions suffice 
to turn this ear strip into a zigzag. The statement of the lemma follows, since 
an inversion needs at most $O(n)$ flips.
\end{proof}

%
%

\subsection{Merging zigzags}\label{subsec:zigzag}

Recall that a merge triangle is an inner triangle that is adjacent to
at least two ear strips. In a fringe triangulation, all ear strips are zigzags.
We call a merge triangle \emph{light}, if two of its vertices have degree less than $k$.
We first show that every fringe triangulation can be flipped to a fringe triangulation with a light merge triangle without violating the degree bound.
 Then we show how to remove a light
merge triangle by merging two of its adjacent ear strips.

\begin{lemma}\label{lem:mergeexists}
  Let $S$ be a set of $n$ points in convex position and let
  $T\in\mathcal{T}_k(S)$ be a fringe triangulation of $S$ with maximum
  vertex degree $k > 6$ that is not a zigzag triangulation. Then $T$ can be flipped in $O(n)$ flips to another fringe triangulation that has a light merge triangle.
\end{lemma}
\begin{proof}
  Since $T$ is a fringe but not a zigzag triangulation, it has at least one merge
  triangle. Let $D$ be the dual graph of $T$, and let $D'$ be the tree that is obtained by deleting all leaf paths from $D$.
  We assume that $D'$ has at least three nodes. Otherwise, $T$ would have only one merge triangle, or exactly two adjacent merge triangles. In both cases it is easy to create a triangulation with no vertex of degree larger than~$6$ by inverting the incident zigzags appropriately. We define as $D''$ the tree obtained by deleting all leaves from $D'$. Let $\Delta''$ be a leaf in $D''$ and let $\Delta'$ be a leaf in $D'$ adjacent to $\Delta''$ (see \figurename~\ref{fig:dualgraphs}). By construction $\Delta'$ is incident to two zigzags, with common point $v_{\text{tip}}$.
%
%
  It is easy to see that $v_{\mathrm{tip}}$ has degree at most~$6$. If
  the degree of $v_{\mathrm{tip}}$ is smaller than~$6$ (and a zigzag
  contains more triangles than just the ear) we invert the incident
  zigzag to increase the degree (by this the degree of one of the
  other vertices of $\Delta'$ decreases).
  Since all vertices of a zigzag, except the vertices of $\Delta'$,
  have degree at most $4$, this inversion maintains the degree bound.
  \begin{figure}[htb]
    \centering
    \includegraphics{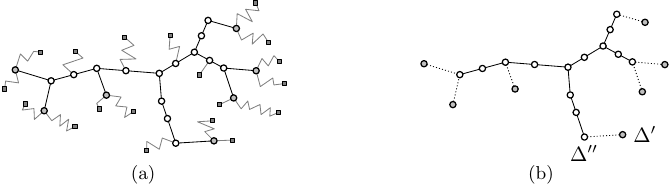}
    \caption{(a) In graph $D$ internal paths are shown as single
      edges; (b) Graphs $D'$ and $D''$ (without gray nodes and dotted
      edges).}
    \label{fig:dualgraphs}
  \end{figure}
  It remains to show that one of the other vertices of $\Delta'$ has degree at most $6$. To prove this we distinguish three cases.\\
  {\bf Case 1:  $\Delta''$ is a path triangle.} As a consequence the vertex of $\Delta'$ that is incident to the convex hull edge of $\Delta''$ has degree $4$ and therefore $\Delta'$ is a light  merge triangle.\\
  {\bf Case 2: $\Delta''$ is a merge triangle and the degree of its
    dual in $D'$ is $2$.} In this case (see
  \figurename~\ref{fig:merge-triangle}, left) $\Delta''$ is adjacent to an
  ear strip. Let $v$ be the vertex of the corresponding zigzag that is
  also a vertex of $\Delta'$. The degree of $v$ is at most $6$ and hence $\Delta'$ is a light merge triangle.\\
  {\bf Case 3: $\Delta''$ is a merge triangle and the degree of its
    dual in $D'$ is $3$.} This situation is depicted in
  \figurename~\ref{fig:merge-triangle}, right. The triangle $\Delta''$ has
  to be adjacent to a merge triangle other than $\Delta'$ that is also
  a leaf in $D'$. We call this triangle $\Delta_s$. Let $v$ be the
  common vertex of $\Delta',\Delta''$, and $\Delta_s$, and let $u$ be
  the common vertex of $\Delta_s$ and its two adjacent ear strips. The
  degree of $v$ could be~$7$ but in this case we can invert the zigzag
  incident to the edge $uv$ and reduce the degree of $v$ to~$6$. The
  inversion increases by~$1$ the degree of $u$, which was at most~$5$
  before. Hence, $\Delta'$ is a light merge triangle.
\begin{figure}[htb]
  \centering
  \includegraphics[width=.7\columnwidth]{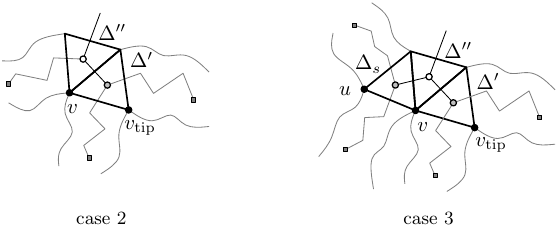}
  \caption{The triangle $\Delta''$ (dual to a leaf of $D''$) can
    have 1 (case 2) or 2 (case 3) children in $D'$.}
  \label{fig:merge-triangle}
\end{figure}

In any case we perform only a constant number of inversions and thus
only a total of $O(n)$ flips.
\end{proof}
%
%

By Lemma~\ref{lem:mergeexists} we can flip every triangulation to a fringe triangulation with
a light merge triangle $\Delta$. We now show how to remove
$\Delta$ by merging its adjacent zigzags $Z_1$ and $Z_2$. Throughout the merging we
denote with $e_1$ the edge that separates $Z_1$ and $\Delta$, and with $e_2$ the edge that separates $Z_2$ and $\Delta$.
Further, let us denote by~$e$ the third edge of~$\Delta$, and by $v_1$ and $v_2$ the endpoints of~$e$,
such that before the merging $v_1$ is part of $Z_1$ and $v_2$ is part of $Z_2$ (see \figurename~\ref{fig:mergezz}(a)). 
The vertex on $\Delta$ that is not on $e$ is named $v_t$.
As $\Delta$ is light we can assume that
$v_1$ has degree at most~$k$, and that $v_2$ has degree at most~$k-1$.
\begin{lemma}\label{lem:merging}
  Let $S$ be a set of $n$ points in convex position,
  let~$T\in\mathcal{T}_k(S)$ be a fringe triangulation of $S$,
  let~$\Delta$ be a light merge triangle of $T$, and let
  $\widetilde{n}$ be the total number of vertices of $\Delta$ and
  its two adjacent ear strips. These ear strips and
  $\Delta$ can be merged into one zigzag ending in an ear with
  $O(\widetilde{n})$ flips, while at no time exceeding a vertex degree
  of $k>6$.
%
\end{lemma}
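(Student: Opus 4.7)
The plan is to prove Lemma~\ref{lem:merging} by formalizing the case analysis already sketched in the paragraphs preceding the statement. I would set up notation as there: the light merge triangle $\triangle$ has base edge $e = v_1 v_2$ and tip $v_{\mathrm{tip}}$ of degree at most~$6$, the adjacent zigzags are $Z_1, Z_2$, and, after possibly relabelling, $\deg(v_1) \leq k$ and $\deg(v_2) \leq k-1$. The procedure consists of the two initial flips of $e_1$ then $e_2$, followed by an alternating sequence of flips on the boundary of the evolving quadrilateral $Q$, with a flip of the diagonal $d_Q$ every second step. Depending on the relative sizes of $Z_1, Z_2$, the process terminates in one of three configurations: the \emph{symmetric} case ($Z'$ and $Z_t$ close up simultaneously), the \emph{repeating} case ($Z'$ meets $Z_t$ before $Z_t$ reaches $v'_{\mathrm{tip}}$), and the \emph{recursive} case ($Z_t$ reaches $v'_{\mathrm{tip}}$ first).

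For the degree bound, the key observation is that during the alternating phase the only vertices whose degree changes are the four corners of $Q$. Each such corner has two convex-hull edges, at most two edges coming from each of its adjacent zigzag (never three, because one of the alternating flips always removes the third before the next one is added), and the diagonal $d_Q$, yielding a maximum degree of $7$, which is at most $k$ since $k>6$. The initial two flips are safe because flipping $e_i$ strictly decreases $\deg(v_i)$ and places the new edge at an interior zigzag vertex of degree at most~$4$. For the recursive step I would verify that the new base edge $d_Q$ has both endpoints of degree at most~$6$, and, if the new tip happens to have degree~$7$ due to a dangling zigzag edge, one inversion of the smaller remaining zigzag restores the degree-$6$ tip condition without violating the bound.

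For the flip count I would separately bound the three cases. In the symmetric case every second flip creates a permanent edge of $Z'$, giving at most $2m$ flips total. In the repeating case edges of $Z_t$ are flipped again, but I would charge each permanent edge of $Z'$ with two additional flips to pay for the transient work in $Z_t$, again giving $O(m)$ flips. In the recursive case, letting $m'$ be the number of vertices consumed before the recursive call, the remainder after the call is symmetric on at most $m/2$ vertices, so
\[
T(m) \;\leq\; O(m') + T\!\left(\tfrac{m}{2}-\tfrac{m'}{2}\right) + O(m-m') \;\leq\; T\!\left(\tfrac{m}{2}\right) + O(m) \;=\; O(m),
\]
which is the recurrence already displayed in the text. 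Combining the three bounds with the two initial flips yields the claimed $O(m)$ total.

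The main obstacle I anticipate is not the flip count, which follows cleanly from the recurrence and the charging scheme, but rather the careful bookkeeping of degrees at the ``seams'' between phases---in particular, checking in the repeating case that the endpoints of the freshly installed base edge $d_Q$ inherit degree at most~$6$ (two hull edges plus at most two edges from each side), and verifying in the recursive case that whatever inversions are needed to re-establish a light merge triangle for the recursive call are themselves executable within the degree bound. Once this bookkeeping is in place, the statement follows from Lemma~\ref{lem:mergeexists} together with the three linear bounds above.
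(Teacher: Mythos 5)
Your proposal reproduces the paper's own argument essentially verbatim: the same two initial flips of $e_1$, $e_2$, the same alternating phase around the quadrilateral $Q$ with $d_Q$ flipped every other step, the same degree-$7$ bound at the corners of $Q$ (two hull edges, at most two edges from each adjacent zigzag, plus $d_Q$), the same three-way case split into symmetric, repeating, and recursive, the same charging argument in the repeating case, and the same recurrence $T(m)\le O(m)+T(m/2)$ in the recursive case. This is correct and is the approach the paper takes; the bookkeeping concerns you flag at the end (degree of the endpoints of the new base edge, and inversions needed to restore a degree-$6$ tip before recursing) are exactly the points the paper also addresses.
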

\begin{proof}
%
We show how to merge $Z_1$ and $Z_2$ together with $\Delta$ into a new
zigzag $Z'$ that starts at $e$. We ensure that the degree of $v_t$ is $6$ by inverting
$Z_1,Z_2$ if necessary.
As a first, preprocessing, step we flip $e_1$ followed by $e_2$.
These flips create the first two triangles of $Z'$ and do not
violate the degree bound. Now there is
a new triangle that lies between (the remains of) $Z_1$
and $Z_2$, with new edges~$e_1$ and~$e_2$. We finish the preprocessing phase by flipping (the new edge) $e_1$ and then (the new edge) $e_2$. The result is depicted in \figurename~\ref{fig:mergezz}(b).
The two vertices $v_1$ and $v_2$ which may have high
vertex degree have been ``cut off''. Notice that if $Z_1$ and $Z_2$ are small the light merge triangle might already be gone.

\begin{figure}[htb]
  \centering
\begin{tabular}{cccc}
\includegraphics[page=1,width=.22\columnwidth]{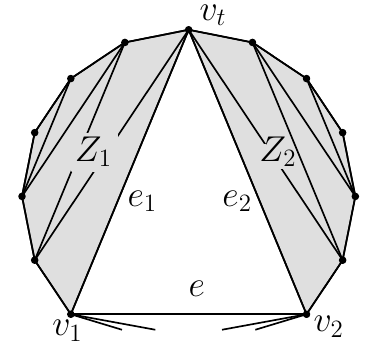} &
\includegraphics[page=2,width=.22\columnwidth]{light} &
\includegraphics[page=3,width=.22\columnwidth]{light} &
\includegraphics[page=4,width=.22\columnwidth]{light} \\
(a) & (b) & (c) & (d)
\end{tabular}
\caption{Removing a light merge triangle. The start configuration (a), after the preprocessing step (b), after flipping $F_3$ (c), and after flipping $F_3$ a second time (d).}
\label{fig:mergezz}
\end{figure}%

Let us pause for a second and see what we have obtained so far. Both, $Z_1$ and $Z_2$ have lost two triangles each. On top we obtained a new ear. This is the new zigzag $Z_t$ that will grow during the flipping sequence. Thus we have two shrinking zigzags $Z_1$ and $Z_2$, and two growing zigzags $Z'$ and $Z_t$. All four zigzags are separated by two adjacent triangles, whose common diagonal is called $d$. We will maintain this configuration throughout the flipping. Notice that the edge labels might change after a flip.

We continue by flipping $e_1$, followed by $d$, and then followed by
$e_2$. We call this flip sequence $F_3$. After flipping $F_3$, the
zigzags $Z_1$ and $Z_2$ have been shrunk by one triangle each. On the
other hand, $Z'$ has been enlarged by two triangles (see
\figurename~\ref{fig:mergezz}(c)). Flipping again $F_3$ will further
decrease $Z_1$ and $Z_2$ but this time we add two triangles to the
zigzag $Z_t$ (see \figurename~\ref{fig:mergezz}(d)). We repeat flipping
$F_3$ until all the triangles of at least one of the zigzags $Z_1$ and
$Z_2$ are consumed. If both zigzags vanish at the same time, we
successfully removed the light merge triangle. If only one zigzag (say $Z_2$)
vanishes, then we created another light merge triangle $\Delta'$. 
We would like to remove this
triangle again by first flipping the preprocessing sequence
followed by iterations of $F_3$. However, the situation at $\Delta'$ might differ
from the start configuration at $\Delta$. Let $v_t'$ be the vertex 
of $\Delta'$ that is incident to $Z_t$ and $Z_1$. For its counterpart  $v_t$ we assumed  that 
its degree  is exactly 6. This is not necessarily true 
for $v'_t$. In order to make $v'_t$ a degree 6 vertex, we could invert $Z_1$ and $Z_t$
appropriately. Inverting the zigzag $Z_t$ can be afforded, since the size of this zigzag is proportional  
to the number of flips we just did (every other time we flip $F_3$, we increase $Z_t$ by two triangles). 
Thus we can assume that the degree of $v'_t$ is at least 5.
Consider now the situation where $Z_t$ contributes to the vertex degree of $v'_t$, but 
the degree of $v_t'$ is 5 (see Fig.~\ref{fig:merging-invert}(a)).
Notice that it might be too costly to invert $Z_1$. Therefore, instead of inverting $Z_1$, we flip the diagonal of $\Delta'$ that is also 
part of $Z_1$ (Fig.~\ref{fig:merging-invert}(b)) and then invert $Z_t$ including the newly added triangle (Fig.~\ref{fig:merging-invert}(c)).
Again the costs of inverting $Z_t$ can be charged to the flips of $F_3$. By this we construct a new light merge triangle $\Delta''$, with the desired degrees.
We consecutively remove the light merge triangles, until the merging is complete.
\begin{figure}[htb]
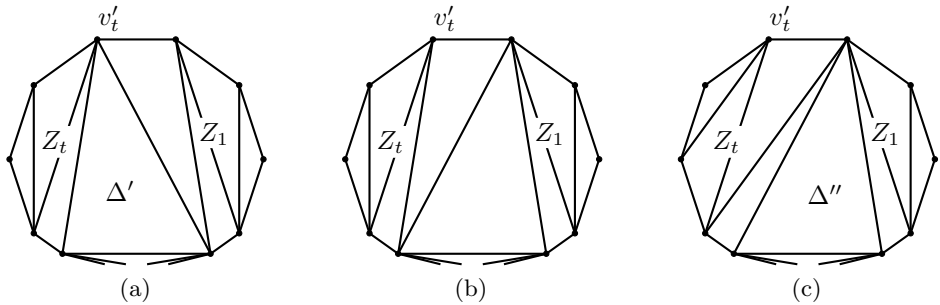

  \centering
\begin{tabular}{ccc}
\includegraphics[page=1,width=.3\columnwidth]{../merging-invert} &
\includegraphics[page=2,width=.3\columnwidth]{../merging-invert} &
\includegraphics[page=3,width=.3\columnwidth]{../merging-invert} \\
(a) & (b) & (c) 
\end{tabular}
\caption{The flipping sequence that constructs a new light merge triangle with the right degree bounds.}
\label{fig:merging-invert}
\end{figure}%

Every other time we flip $F_3$ (including the possible flips we charged for the inversions of $Z_t$), we
add two triangles to $Z'$. Thus we need only a constant number of flips to enlarge $Z'$, which shows 
that after  $O(\widetilde{n})$ flips we
removed the light merge triangle.
\end{proof}

\subsection{Rotating a zigzag triangulation}\label{subsec:rotate}

Recall that a zigzag triangulation is uniquely defined (up to an
inversion) by the location of one of its ears. A {\em rotation} is a
sequence of flips that transforms one zigzag triangulation into another
one. We introduce  a
rotation operation that needs only a linear number of flips.

\begin{figure}[htb]
  \centering
  \begin{tabular}{ccc}
  \includegraphics[width=.32\columnwidth,page=1]{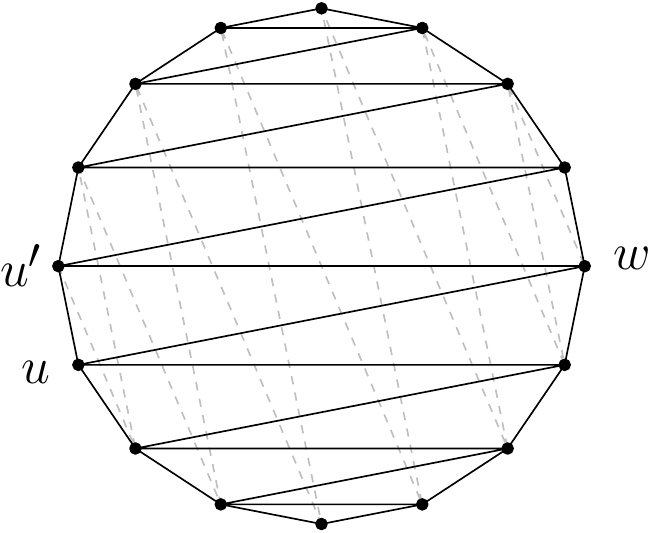} & &\hskip6ex
  \includegraphics[width=.32\columnwidth,page=2]{impr-rot}
  \\ (a) & &(b)
  \end{tabular}
  \caption{Rotating a zigzag using O(n) flips as described in Lemma~\ref{lem:rotate}. The dashed edges in (a) depict the target zigzag triangulation $T'$.}
  \label{fig:linrot}
\end{figure}

\begin{lemma}\label{lem:rotate}
  Let $S$ be a set of $n$ points in convex position and let $T$ be any
  zigzag triangulation of $S$. $T$ can be flipped into any other
  zigzag triangulation of $S$ with $O(n)$ flips, while at no time
  exceeding a vertex degree of $k>6$.
\end{lemma}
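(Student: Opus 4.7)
The plan is to prove Lemma~\ref{lem:rotate} by a case analysis on the combinatorial angle $\alpha$ between the source line $\ell_s$ and the target line $\ell_t$ (i.e.\ on the number of vertices of $S$ lying between the source and target ears on one side), reducing every case either to a single $90^\circ$ rotation, to a single $45^\circ$ rotation, or to a composition of a $90^\circ$ rotation with a rotation whose angle lies in $[45^\circ,90^\circ]$. Since any angle $\alpha\in(0^\circ,180^\circ)$ can be written as $\alpha = 90^\circ\cdot j + \beta$ with $j\in\{0,1\}$ and $\beta\in[45^\circ,90^\circ]$ (choosing $j=1$ whenever $\alpha>90^\circ$, and observing that rotations by $<45^\circ$ are identified with rotations by $>135^\circ$ on the opposite side), it suffices to handle the three basic cases already sketched in the text preceding the lemma.

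First, I would formalize the $90^\circ$ rotation as the \emph{basic symmetric step}: the two target tips $u,v$ are joined by an edge $e$ of the source zigzag; flipping $e$ creates a separating diagonal $d_Q$ of a quadrilateral $Q$, after which one alternately flips the edge of $Q$ shared with the source zigzag and the diagonal $d_Q$. This grows two target zigzags out of $u$ and $v$, exactly as in the symmetric case of the merge in Subsection~\ref{subsec:zigzag}. The degree bookkeeping is the same as in that merge: every vertex encountered carries at most two convex-hull edges, at most two zigzag edges from each adjacent zigzag, and possibly $d_Q$, for a total of at most $7\le k$. The flip count is $O(n)$ since each newly created edge of the target zigzag is flipped at most a constant number of times.

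Next, for the $45^\circ$ case I would select the pair of \emph{temporary tips} $u',v'$ adjacent to $u,v$ and equidistant along the convex hull to the source ears; this yields two disjoint $90^\circ$-like sub-configurations, one growing zigzags from $\{u,u'\}$ and one from $\{v,v'\}$. Applying the basic step to each sub-configuration in parallel, the four growing zigzags meet in the middle and complete the target zigzag. The degree argument and the linear flip bound transfer verbatim, essentially by applying the basic step twice. For an angle $\beta\in(45^\circ,90^\circ)$, the same choice of temporary tips $u',v'$ is made, but now when the four zigzags meet they do not complete a $90^\circ$ configuration; instead two light merge triangles $\triangle_u,\triangle_v$ appear on the two sides (one involving a temporary-tip zigzag and the corresponding source-tip zigzag). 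I would then invoke Lemma~\ref{lem:merging} on each of $\triangle_u$ and $\triangle_v$ to merge these pairs of zigzags into single zigzags; this brings the configuration into a $90^\circ$ situation, which is finished by the basic step. Each of the two merges costs $O(n)$ flips and preserves the degree bound, so the total cost is still $O(n)$.

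The main obstacle I anticipate is verifying that the merge triangles produced by the meeting of the four zigzags are indeed \emph{light} in the sense of Lemma~\ref{lem:merging}, i.e.\ that their base-edge vertices have degree $<k$, and that the temporary tips together with the newly created diagonals never push any vertex above degree~$7$. This needs a careful local inspection of the vertex just \emph{outside} the quadrilateral $Q$ at the moment $Z'$ and $Z_t$ meet: because the target zigzag only contributes at most two edges to any vertex, and the source zigzag (whose leaf-path dual is all that survives on that side) also contributes at most two, plus two convex-hull edges and possibly one diagonal, the bound of $7\le k$ survives, and the $<k$ condition for the merge triangle's base edge holds automatically as $k>6\ge 7-1$. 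Once this local invariant is checked, combining the three building blocks with the composition remark above completes the proof, and the linear flip bound is preserved because we perform at most a constant number of basic steps and merges.
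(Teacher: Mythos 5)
Your proposal follows the paper's own proof essentially step for step: the $90^\circ$ rotation as the basic symmetric step mirroring the symmetric merge, the $45^\circ$ rotation via temporary tips $u',v'$ and two parallel $90^\circ$-like growths, the $(45^\circ,90^\circ)$ case finished by invoking Lemma~\ref{lem:merging} on the two light merge triangles $\triangle_u,\triangle_v$, and the composition ``$90^\circ$ followed by $[45^\circ,90^\circ]$'' for the remaining angles. The degree bookkeeping (at most $7\le k$ at the quadrilateral vertices) and the observation that the base-edge endpoints of the emerging merge triangles have degree at most $6<k$ are the same as the paper's; you simply spell out the lightness check a bit more explicitly than the paper does.
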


\begin{proof}
  Let $u$ be the tip of an ear of the target zigzag
  triangulation. Further, let $u'$ be a vertex next to $u$ on the
  convex hull of $T$. Denote with $\Delta$ the triangle in $T$,
  which is incident to the edge $(u,u')$. \figurename~\ref{fig:linrot}(a)
  shows the situation, where the dashed edges present the target
  zigzag triangulation and $\Delta$ in $T$ is spanned by $u$, $u'$,
  and $w$.

The removal of the triangle $\Delta$ would split the zigzag
triangulation $T$ into the two zigzags $Z_1$ and $Z_2$. As a first
step we invert $Z_1$ and $Z_2$. The result is shown in
\figurename~\ref{fig:linrot}(b). In this configuration $\Delta$ is a
degenerate light merge triangle. (Think of a zigzag of size zero glued
to $(u,u')$.) Therefore, we can apply Lemma~\ref{lem:merging} and merge $Z_1$ and $Z_2$ into one
single zigzag $Z$. The zigzag $Z$ induces a zigzag triangulation that
has an ear with tip $u$. Either this triangulation coincides with
$T'$, or we obtain $T'$ after a single inversion of $Z$.

The flipping sequence uses $O(n)$ flips to invert $Z_1$ and
$Z_2$. Applying Lemma~\ref{lem:merging} needs again $O(n)$ flips, as
well as the possible final inversion of $Z$, and thus
 we need $O(n)$ flips in total.
\end{proof}

%
%


%

We have now obtained all lemmas to give a proof for
Theorem~\ref{theorem:main}.
\begin{proof}[of Theorem~\ref{theorem:main}]
  Lemma~\ref{lem:fans} states that any triangulation with maximum
  vertex degree $k>6$ can be transformed into a fringe triangulation
  with $O(n^2)$ flips. A fringe triangulation  contains only
  a linear number of inner triangles. By Lemma~\ref{lem:mergeexists} at least one of them can be turned into a light
  merge triangle with $O(n)$ flips. This light merge
  triangle can be ``resolved'' by merging its two adjacent ear strips as explained in~Lemma~\ref{lem:merging}.
    By constructing and merging light merge triangles in succession we remove all inner triangles.
  This
  takes $O(n)$ flips per merge triangle, according to
  Lemma~\ref{lem:merging}, and thus sums up to a total of $O(n^2)$ flips.
  Notice that after the merge the resulting triangulation might contain a single ear strip that is not a zigzag. 
  However, after the merge the number of outer triangles was not decreased.
  By repeated inversions we can turn this triangulation into a fringe triangulation with the technique presented in Lemma~\ref{lem:fans}.
  Let $k_1,k_2,\ldots$ be the number of outer triangles for each fringe triangulation that appears
  in the flip sequence (in order). By Lemma~\ref{lem:fansacc} the total costs of flipping to a fringe
  triangulations are at most $\sum_i n(k_{i+1}- k_i)$.
  The accumulated costs for recreating a fringe triangulation after a merge are therefore  $O(n^2)$.
  Lemma~\ref{lem:rotate} concludes the proof by showing that
  each zigzag triangulation can be rotated to the canonical zigzag
  triangulation using $O(n)$ flips. In addition each operation used
  (set of flips) respects the degree bound, as stated in the
  corresponding lemmas.
\end{proof}

\section{Point sets in convex position with relaxed degree bounds}
\label{sec:relaxed}

If we allow a slightly increased degree bound in intermediate steps
(when flipping from one degree bounded triangulation to another) then
we can show an upper bound  of $O(n\log n)$ for the diameter of the flip graph. To
this end, we will show that in this relaxed setting we can remove a
constant fraction of the inner triangles per step, involving a linear
number of flips.  For this we need to slightly redefine fringe
triangulations, and improve the handling of fans and the rotation of
zigzag triangulations.

\subsection{Relaxed fan handling}\label{subsec:fringenew}

We first introduce a new class of triangulations that will play the role of the fringe triangulation in the previous section.
\begin{definition}\rm
We call a triangulation a \emph{relaxed fringe triangulation} if, and only if,
\begin{enumerate}
\item[(a)] it is a fringe triangulation, i.e., every ear strip is a zigzag,
\item[(b)] it contains no fan of size greater than~$3$,
\item[(c)] every fan is adjacent to an inner triangle.
\end{enumerate}
\end{definition}

Let us now show how to update Lemma~\ref{lem:fans}. To achieve a shorter flip sequence in the lemma we allow to flip to a relaxed fringe triangulation.
\begin{lemma}\label{lem:fans2}
  Let $S$ be a set of $n$ points in convex position and let
  $T\in\mathcal{T}_k(S)$. Then $T$ can be transformed into a relaxed fringe
  triangulation of $S$ in $O(n)$ flips, while at no time exceeding a
  vertex degree of $k>6$.
%
\end{lemma}
\begin{figure}
  \centering
  \includegraphics{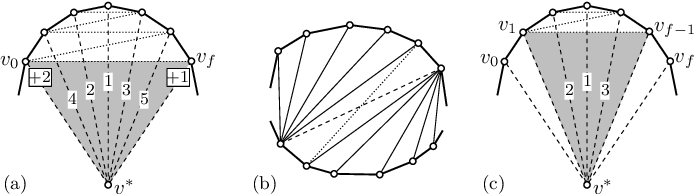}
  \caption{Handling fans: The boxed numbers show the degree change,
    the other numbers indicate the order in which flips are
    performed.}
  \label{fig:fan2}
\end{figure}
\begin{proof}
The flipping sequence consists of two parts. In a preprocessing step we flip all edges that are contained in two fans with different fan handles.
Such a pair of fans is depicted in \figurename~\ref{fig:fan2}(b). After flipping the common edge, the adjacent fans will either vanish, or be separated by a zigzag.

We continue by removing all fans of size greater than~$3$.
Consider a fan, $F$, of size $f$ and with handle $v^*$, like the one
depicted in \figurename~\ref{fig:fan2}(a). The fan $F$ contains a chain of the convex hull of $S$.
Let~$v_0,v_f$ be the endpoints of that chain.

Assume first that neither $v_0$ nor $v_f$ are vertices of an inner triangle. Since $v_0$ and $v_f$ cannot be  fan handles
their degree is at most $4$. Therefore we can convert $F$ to an
inner triangle $\Delta(v^*v_{f}v_0)$ and a zigzag containing an ear, performing $O(f)$ flips (see
\figurename~\ref{fig:fan2}(a)).

Assume now that either  one of the vertices $v_0$ or $v_f$  is incident to an inner triangle.
If $F$ has size smaller than $4$, we leave the fan unaltered. Otherwise, we treat the strip spanned by $v^*, v_1,\dotsc v_{f-1}$ as a (sub)fan of size $f-2$.
Since the vertices $v_1$ and $v_{f-1}$ have degree three, this ``subfan'' can be removed by the procedure
in the previous paragraph.
In particular  we can flip $F$ to an inner
triangle $\Delta(v^*v_{f-1}v_1)$ and a zigzag adjacent to the edge $(v_1,v_{f-1})$ as depicted in \figurename~\ref{fig:fan2}(c).

We end with a triangulation, whose ear strips contain a single fan of size three or smaller. By the technique described in Lemma~\ref{lem:fans}, such a fan can be removed by a series of inversions. Thus, an ear strip of size $n'$ can be turned into a zigzag by $O(n')$ flips. As a consequence, all ear strips can be transformed into zigzags using $O(n)$ flips. Furthermore we observe that
the preprocessing sequence flips at most $n$ edges and the removal of a fan of size $f$ takes $O(f)$ flips. Thus a relaxed fringe triangulation can be reached in $O(n)$ flips.
\end{proof}

\subsection{Merge two inner triangles to one}

Consider the dual graph $D$ of a relaxed fringe triangulation $T$. The
reduced graph $D'$ is obtained by removing all leaf paths from
$D$. Each leaf in $D'$ is a merge triangle. In the relaxed setting all
merge triangles are light. Inner triangles, that are dual to nodes of
degree $2$ in $D'$, are incident to one ear strip in $T$. Remember
that all ear strips are zigzags. We call two of these inner triangles
{\em consecutive} if the triangulation between them is a strip. We
show how to merge two consecutive inner triangles into one, thereby
reducing the total number of inner triangles in $T$ by one.

  \begin{figure}[htb]
    \centering
    \includegraphics[width=.45\columnwidth]{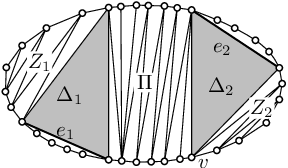}
    \caption{Example of two consecutive  inner triangles, $\Delta_1$
      and $\Delta_2$, with degree $2$ in the reduced dual graph $D'$.}
    \label{fig:merge-relaxed}
  \end{figure}
Let $\Delta_1$ and $\Delta_2$ be two consecutive inner triangles
that have degree $2$ in $D'$.
Let $\Pi$ be the strip between~$\Delta_1$ and~$\Delta_2$, and let
$Z_1$ ($Z_2$) be the maximal zigzag, which is an ear strip, incident to
$\Delta_1$ ($\Delta_2$).
Further let $e_1$, $e_2$ be the edges of $\Delta_1$ and
$\Delta_2$, respectively, that are not part of a triangle of $\Pi$,
$Z_1$, or $Z_2$. See \figurename~\ref{fig:merge-relaxed} for an example.
The diagonals $e_1$ and $e_2$ delimit a connected subset
$\widetilde{T}$ of $T$, which is the union of $\Delta_1$,
$\Delta_2$, and the triangles of $Z_1$, $Z_2$, and $\Pi$. We denote
the size of $\widetilde{T}$ with $\widetilde{n}$.


\begin{lemma}\label{lem:lincombine}
  Let $\widetilde{T}$ be a subset of a relaxed fringe triangulation $T$ as
  defined in the above paragraph. We can transform~$T$ into a relaxed fringe
  triangulation, such that the two inner triangles $\Delta_1$ and
  $\Delta_2$ get merged to at most one inner triangle. This
  reduces the number of inner triangles by at least $1$ and takes
  $O(\widetilde{n})$ flips, while at no time exceeding a vertex degree
  of $k+4$.
\end{lemma}

\begin{proof}
  We only consider the subset $\widetilde{T}$ which can be seen as
  cutting $T$ along $e_1$ and $e_2$, turning these diagonals into convex
  hull edges of $\widetilde{T}$.
  Observe that $\widetilde{T}$ is dual to a path.  As $T$ is a relaxed fringe
  triangulation, $\widetilde{T}$  contains at most two fans of constant size.
  Thus, $\widetilde{T}$ can be converted into a zigzag triangulation
  with a constant number of inversions, and a total of $O(\widetilde{n})$
  flips.
  This zigzag triangulation can be rotated, such that it starts at
  $e_1$. By Lemma~\ref{lem:rotate} only $O(\widetilde{n})$ flips
  are needed for this rotation.

  This transforms $T$ into a relaxed fringe triangulation $T'$. Between $e_1$
  and $e_2$, $T'$ has a zigzag starting at $e_1$ and ending at an
  inner triangle, $\Delta$, at $e_2$, and another zigzag dual to a
  leaf path incident to $\Delta$. Observe that, if the chains of
  the convex hull of $\widetilde{T}$ between $e_1$ and $e_2$ are of
  equal size (up to $\pm 1$), then the resulting triangulation between
  $e_1$ and $e_2$ is a zigzag (without inner triangles).

  We now argue that we have respected the degree bound during the
  rotation and inversion of a zigzag
  (inside $\widetilde{T}$). The only vertices that can
  accumulate a degree bigger than~$k$ are the vertices of $e_1$ and
  $e_2$.
  These vertices have a degree of at least $3$ in $\widetilde{T}$
  before the transformation. $\widetilde{T}$ contains only fans of
  constant size (less than $4$) and the operations used (inversion,
  rotation) generate a degree of at most $7$ in the present case (see
  for example the left-most vertex in
  \figurename~\ref{fig:mergezz}(c)).
  Thus, during the transformation, the vertices of $e_1$ and $e_2$ get
  an overall degree in~$T$ of at most $k+4$. The result of the merge
  is again a relaxed fringe triangulation with vertex degree at most $k$, as
  the result of transforming $\widetilde{T}$ is a zigzag
  triangulation. (If one of the vertices of $e_1$ and $e_2$ had a
  degree of $3$ in $\widetilde{T}$, it may be necessary to invert one (or both) of the
  zigzags in $T'$ between $e_1$ and $e_2$ to restore the vertex degree
  bound. This  takes only another $O(\widetilde{n})$ flips.) 
\end{proof}

\subsection{Putting things together}

We now have all tools to show an upper bound of $O(n\log
n)$ for the flip distance of bounded degree triangulations with
relaxed intermediate degree bounds.

\begin{theorem}\label{thm:nlogn}
  Let $S$ be a set of $n$ points in convex position and let
  $T\in\mathcal{T}_k(S)$ be a triangulation of $S$ with maximum vertex
  degree $k > 6$. Then $T$ can be flipped into the canonical
  triangulation of $S$ in $O(n\log n)$ flips while at no time
  exceeding a vertex degree of $k+4$.
\end{theorem}

\begin{proof}
  By Lemma~\ref{lem:fans2} we can transform $T$ into a relaxed fringe
  triangulation $T'$ as defined in Section~\ref{subsec:fringenew}. We
  show that we always can remove at least $1/8$  of the inner
  triangles of $T'$ with a linear number of flips. The result is again  a relaxed
  fringe triangulation.

  Consider the tree $\hat D$ whose nodes are the inner triangles of
  $T'$. Two nodes in $\hat D$ are connected by an edge if they
  correspond to consecutive inner triangles.
  Let $d_1$, $d_2$, $d_3$ be the number of degree $1$, $2$, $3$ nodes
  in $\hat D$, and let $m=d_1+d_2+d_3$ be the number of inner
  triangles in $T'$.  Assume first that $\hat D$ has at least $m/8$
  leaves. Every leaf of $\hat D$ corresponds to a light merge triangle for
  the relaxed degree bound. By Lemma~\ref{lem:merging} we can
  therefore merge $m/8$ light merge triangles with $O(n)$ flips. After
  the merging we reduced the number of inner triangles by a factor of
  $m/8$. Notice that after the merges we might have to apply inversions to
  reconstruct a relaxed fringe triangulation. These costs are counted
  separately. The number of outer triangles is non-decreasing throughout 
  the flipping sequence, therefore, due to Lemma~\ref{lem:fansacc},
   the accumulated costs for the extra inversions require $O(n^2)$ flips.
   
  Assume now that there are less than $m/8$ leaves in $\hat D$.
  Observe that in a tree with no nodes of degree two, the number of
  leaves exceeds the number of inner nodes, and therefore in our case
  we have $d_3 < d_1 < m/8$.  Since $\hat D$ is a tree it has $m-1$
  edges. The number of edges with a vertex of degree one or three is
  less than $d_1+3 d_3 < m/2$. Hence, we have more than $m/2$ edges
  where both endpoints have degree two. Let this edge set be $E_2$,
  with $|E_2| > m/2$. The tree $\hat D$ decomposes into three
  edge-disjoint matchings $M_1, M_2, M_3$. Such a decomposition can be
  determined greedily. Assume that $M_1$ is the matching that contains
  the majority of edges from $E_2$. For every edge in $M_1\cap E_2$ we
  can apply Lemma~\ref{lem:lincombine} independently. The combined
  flipping sequence has $O(n)$ flips in total. The number of inner
  triangles is reduced by a factor of $|M_1\cap E_2| \geq |E_2|/3 >
  m/6$.

  Thus, in both cases, we can reduce the number of inner triangles by
  at least a factor of $m/8$ per $O(n)$ flips. This gives an overall
  bound of $O(n\log n)$ flips.
\end{proof}

\section{Point sets in general position}\label{sec:general}

\begin{figure}[htb]
  \centering
  \includegraphics{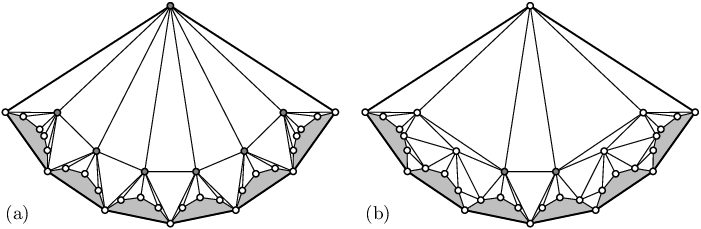}
  \caption{Two triangulations which cannot be flipped into each other.}
  \label{fig:triexample1}
  \vspace{-\baselineskip}
\end{figure}
In this section we study flip graphs of bounded degree triangulations
of a set $S$ of $n$ points in general position in the plane.
As in Section~\ref{sec:convex}, let $\mathcal{T}_k(S)$ be the set of
triangulations of $S$ such that all vertices have degree at most $k$.
We study the same question as in the convex setting:
If there are two triangulations $T_1\in\mathcal{T}_k(S)$ and
$T_2\in\mathcal{T}_k(S)$, is it possible to flip from $T_1$ to $T_2$
while at no time exceeding a vertex degree of~$k$?
%
%

We can answer this question negatively for any
$k$.
For $k<7$ we know from Section~\ref{sec:convex} that the flip graph of
$\mathcal{T}_k(S)$ may be disconnected. In the following we show that
for any $k\geq 7$ there exists a point set which has two
triangulations $T_1\in\mathcal{T}_k(S)$ and $T_2\in\mathcal{T}_k(S)$
which cannot be flipped into each other without exceeding a vertex
degree of $k$. Consider the example for $k = 8$ depicted in
\figurename~\ref{fig:triexample1}. The shaded parts represent zigzag
triangulations and the dark vertices have degree~8. In the left
triangulation only edges of the zigzags can be flipped without
exceeding vertex degree 8. Hence it is impossible to reach the
triangulation on the right. It is not difficult to obtain similar
examples for any $k \geq 7$.

Finally, we consider the flip graph connectivity problem for a relaxation of
triangulations.
\emph{Pseudo-triangulations} (see~\cite{Rote2008} for a survey)
generalize triangulations in the following sense: A
\emph{pseudo-triangle} is a planar polygon with exactly three internal
angles less than $180^\circ$. A pseudo-triangulation of a point set
$S$ is a partition of the convex hull of $S$ into pseudo-triangles
whose vertex set is $S$. A pseudo-triangulation is called
\emph{pointed}, if every vertex is incident to an angle greater than
$180^\circ$.  Two pseudo-triangles which share an edge form a
(possibly degenerate) pseudo-quadrilateral $Q$, that is a polygon with
4 internal angles less than $180^\circ$.  We call the vertices that
realize the small angles the \emph{corners} of $Q$.  The common edge
$e$ of the pseudo-triangles is the only non-polygon edge of a geodesic
between two opposing corners in $Q$. The geodesic between the other
two corners consists also of only one non-polygon edge, say
$e'$. Exchanging $e$ by $e'$ is called a flip in a pointed
pseudo-triangulation (see \figurename~\ref{fig:ptflip}).  The induced flip
graph ${\cal F}_{PT}(S)$ of pointed pseudo-triangulations of a point
set $S$ is connected and its diameter is $O(n \log
n)$~\cite{Bereg2004}.
\begin{figure}[h]
  \centering
  \includegraphics[width=.6\columnwidth]{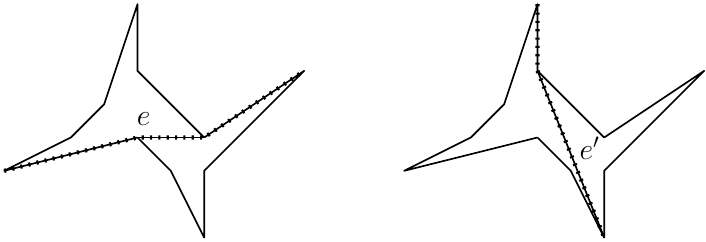}
  \caption{A flip in a pointed pseudo-triangulation. The dotted lines denote the geodesics between opposing corners of the pseudo-quadrilateral.} \label{fig:ptflip}
\end{figure}
%
%
Further, it is known that any point set $S$ in
general position has a pointed pseudo-triangulation of maximum vertex
degree 5~\cite{kkmsst-tbptp-03}. Hence, the question arises if there is a $k \geq 5$ such that
the flip graph of pointed pseudo-triangulations with maximum vertex
degree $k$ is connected.

\begin{figure}[h]
  \centering
  \includegraphics{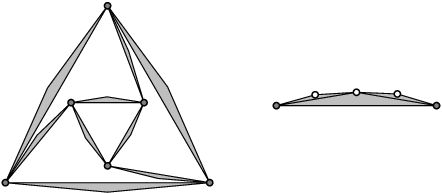}
  \caption{The ``triangular edges'' in the left drawing consist of the
    structure shown on the right, with the indicated orientation. Dark
    vertices have degree 9.} \label{fig:degree9}
\end{figure}
For point sets in convex position every pointed pseudo-triangulation
is in fact a triangulation. Thus our results on triangulations of
convex point sets apply also for pointed pseudo-triangulations.
For point sets in general position the question whether the flip graph of pointed pseudo-triangulations with maximal vertex degree  $k$ is connected remains open. However, we can show, that for $k\leq 9$ the induced flip graph is disconnected. Consider the pointed pseudo-triangulation
$P$ depicted in \figurename~\ref{fig:degree9}. $P$ has maximal vertex degree
9, but no edge of $P$ can be flipped without creating a vertex with degree 10.
However, $P$ is  not the only pointed pseudo-triangulation of this point set with maximum
vertex degree 9. Examples for $k=7,8$ can be easily derived from this construction.

\section{Conclusions and future work}\label{sec:conclusion}

We considered the flip graph of degree bounded (pseudo-)triangulations
with respect to the maximum vertex degree.
For point sets $S$ in convex position we showed that for the set
$\mathcal{T}_k(S)$ of triangulations with maximum vertex degree $k$,
the flip graph is not connected for $k\leq6$, but connected for any
$k>6$.
That is, for any $T_1,T_2\in\mathcal{T}_k(S)$, $k>6$, there exists a
sequence of flips from $T_1$ to $T_2$ such that each intermediate
triangulation~$T_j$ is also in $\mathcal{T}_k(S)$.
We were able to bound the length of such a
sequence by $O(n^2)$.


It is an open question whether the bound on the flip distance can be further
improved. Theorem~\ref{thm:nlogn} gives hope that a bound of $O(n\log n)$ on the diameter of  $\mathcal{T}_k(S)$ is within reach.
 The $k+4$ relaxation for intermediate triangulations stems
mainly from the intention of keeping the proofs as simple as
possible. We are certain that a closer investigation will lead to a
reduction of the additive term, maybe even to zero.
Further, we were able to prove that the flip graph is not connected
for triangulations $T\in\mathcal{T}_k(S)$ of point sets $S$ in general
position, for any (constant) $k$. There is a slight chance that the flip graph remains
connected when allowing a relaxed intermediate degree bound.


For degree bounded pointed pseudo-triangulations in general point sets
we showed, that for a maximum vertex degree of at most $9$ the flip
graph is not connected. Hence, for degree bounded pointed
pseudo-triangulations it is only known that, if the flip graph is
connected, then the maximum vertex degree has to be at least 10.
Further, for pointed pseudo-triangulations another interesting
question arises, when bounding the face-degree, that is the maximum
number of vertices of pseudo-triangles. Every point set has a pointed
pseudo-triangulation with face degree at most
$4$~\cite{kkmsst-tbptp-03}. For any $k\ge4$ it is open if ${\cal
  F}_{PT}$ stays connected when deleting all vertices that correspond
to pointed pseudo-triangulations which contain a face of degree
greater than~$k$.

\section{Acknowledgments}\label{sec:ack}

This work was initiated during the Second European
Pseudo-Triangulation Working Week in Alcal{\'{a}} de Henares, Spain,
2005. We want to thank the organizers and all participants for a
stimulating environment and fruitful discussions. Moreover, we thank 
Francisco Santos for helpful comments on this subject.

O.~Aichholzer was funded by the Austrian Science Fund (FWF):
I 648-N18 and S09205-N12.
T.~Hackl was funded by the Austrian Science Fund (FWF): P23629-N18.
D.~Orden and P.~Ramos were partially supported by MICINN grants MTM2008-04699-C03-02/MTM and MTM2011-22792 and by the ESF EUROCORES programme EuroGIGA, CRP ComPoSe, under grant
EUI-EURC-2011-4306.
G.~Rote and A.~Schulz were partially supported by the Deutsche
Forschungsgemeinschaft (DFG) under grant \mbox{RO 2338/2-1}.
B.~Speckmann was partially supported by the Netherlands' Organisation
for Scientific Research (NWO) under project no.~639.022.707.

\bibliographystyle{abbrv}      
\bibliography{FlipGraphs}   

\end{document}